\long\def\@makefntext#1{\noindent #1}
\newskip\tabcentering \tabcentering=1000pt plus 1000pt minus 1000pt
\def\MCH#1#2{\setbox0=\hbox{\raise#1\hbox{#2}}\smash{\box0}}
\def\@evenfoot{}\def\@oddfoot{}
\def\@evenhead{\hbox to\textwidth{\footnotesize\rm\thepage \hfill
{\it Cheng Zhiyun}}} 
\def\@oddhead{\hbox to \textwidth{\footnotesize{\it
A polynomial invariant of virtual knots} \hfill\thepage}}
\def\bc{\begin{center}}
\def\ec{\end{center}}
\def\no{\noindent}
\def\hang{\hangindent\parindent}
\def\textindent#1{\indent\llap{\qquad #1\ \ \enspace}\ignorespaces}
\def\ref{\par\hang\textindent}
\newtheorem{theorem}{Theorem}[section]
\newtheorem{lemma}[theorem]{Lemma}
\newtheorem{corollary}[theorem]{Corollary}
\newtheorem{example}[theorem]{Example}
\newtheorem{proposition}[theorem]{Proposition}
\begin{document}
\abovedisplayskip=6pt plus 1pt minus 1pt \belowdisplayskip=6pt
plus 1pt minus 1pt
\thispagestyle{empty} \vspace*{-1.0truecm} \noindent
\vskip 10mm

\bc{\large\bf A polynomial invariant of virtual knots\\[2mm]
\footnotetext{\footnotesize The authors are supported by NSF 11171025}} \ec

\vskip 5mm
\bc{\bf Cheng Zhiyun\\
{\small School of Mathematical Sciences, Beijing Normal University
\\Laboratory of Mathematics and Complex Systems, Ministry of
Education, Beijing 100875, China
\\$($email: czy@mail.bnu.edu.cn$)$}}\ec

\vskip 1 mm

\noindent{\small {\small\bf Abstract} The aim of this paper is to introduce a polynomial invariant $f_K(t)$ for virtual knots. We show that $f_K(t)$ can be used to distinguish some virtual knot from its inverse and mirror image. The behavior of $f_K(t)$ under connected sum is also given. Finally we discuss which kind of polynomial can be realized as $f_K(t)$ for some virtual knot $K$.
\ \

\vspace{1mm}\baselineskip 12pt

\no{\small\bf Keywords} virtual knot; odd writhe polynomial\ \

\no{\small\bf MR(2000) Subject Classification} 57M25\ \ {\rm }}

\vskip 1 mm


\vspace{1mm}\baselineskip 12pt



\section{Introduction}
Virtual knot theory was proposed by Louis H. Kauffman in [2]. Classical knot theory studies the embeddings of circles in thickened $S^2$, as a generalization of classical knot theory, virtual knot theory studies the stabilized embeddings of circles in thickened surfaces of arbitrary genus. Another motivation of studying virtual knot theory comes from the representations of knot diagrams by oriented Gauss diagrams. Two virtual knots are equivalent if and only if their associated Gauss diagrams are equivalent modulo the corresponding Reidemeister moves. Since not all Gauss diagrams can be realized on the plane, virtual knots are represented by planar diagrams by introducing virtual crossings as well as real crossings.

An interesting question in virtual knot theory is how to detect whether a given virtual knot is classical $($has a diagram without virtual crossing$)$ or not? Many invariants have been introduced to give some obstructions for a virtual knot to be classical, see [4] for a good survey. One simple but useful invariant is the odd writhe $J(K)$, which was defined in [3]. Inspired by the warping polynomial introduced in [8], we define a new polynomial invariant $f_K(t)$ of $\mathbb{Z}[t, t^{-1}]$ for each virtual knot $K$, say the \emph{odd writhe polynomial}, since it will be shown that $f_K(\pm1)=J(K)$. Moreover we will show that $f_K(t)$ is really more powerful than $J(K)$, and some basic properties of $f_K(t)$ will be discussed. Finally a sufficient and necessary condition for a polynomial to be the odd writhe polynomial of some virtual knots is given.

\section{The definition of the odd writhe polynomial}
According to [5], a \emph{virtual link diagram} is a planar 4-valent graph endowed with some crossing information on each crossing point: either an overcrossing and undercrossing or a virtual crossing $($a 4-valent vertex with a small circle around it$)$. Two link virtual diagrams are \emph{equivalent} if there exists a sequence of generalized Reidemeister moves connecting them, see the figure below. Here $\Omega_1, \Omega_2, \Omega_3$ denote the classical Reidemeister moves, $\Omega'_1, \Omega'_2, \Omega'_3$ are the virtual versions of the classical Reidemeister moves and $\Omega^s_3$ represents the semivirtual version of the third Reidemeister move.
\begin{center}
\includegraphics{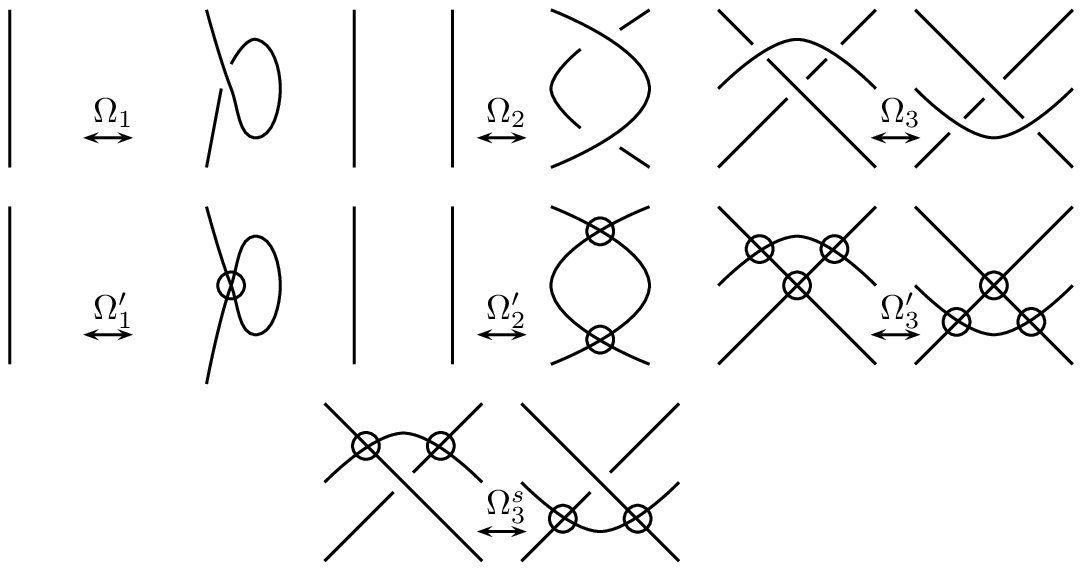} \centerline{\small Figure
1\quad}
\end{center}

From now on, $($virtual$)$ knot diagrams we mention below are all oriented. For each $($virtual$)$ knot diagram $K$, the \emph{Gauss diagram} of $K$ consists of a circle together with a chord connecting the preimages of each classical crossing point. To incorporate the information of overcrossing and undercrossing, the chords are endowed with an orientation from the preimage of the overcrossing to the preimage of the undercrossing. The sign of each chord is equal to the writhe of the corresponding crossing. Without loss of generalization, we assume that the circle in the Gauss diagram is anti-clockwise oriented. The figure below gives an example of virtual trefoil and its Gauss diagram.
\begin{center}
\includegraphics{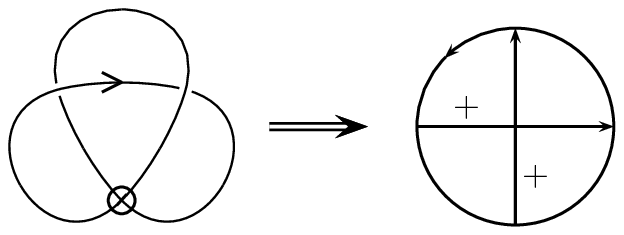} \centerline{\small Figure
2\quad}
\end{center}

Given a virtual knot diagram, there is a unique associated Gauss diagram. However given a Gauss diagram, the corresponding virtual diagrams are not unique. Note that $\Omega'_1, \Omega'_2, \Omega'_3$ and $\Omega^s_3$ all preserve the Gauss diagram, the following theorem was proved in [1].
\begin{theorem}$^{[1]}$
A Gauss diagram uniquely defines a virtual knot isotopy class.
\end{theorem}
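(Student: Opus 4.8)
The plan is to prove the two assertions contained in ``uniquely defines'': that a virtual knot diagram realizing a prescribed Gauss diagram $G$ \emph{exists}, and that any two such diagrams represent the same virtual knot. Following [1], existence is the routine half. I would draw $n$ pairwise disjoint little disks in the plane, where $n$ is the number of chords of $G$, place inside the $i$-th disk a standard classical crossing carrying the over/under information and the sign of the $i$-th chord, and then join the $4n$ arc-ends sitting on the boundaries of these disks into a single oriented circle, following the cyclic order in which the chord endpoints are encountered as one traverses the core circle of $G$; any unavoidable intersections among the connecting arcs are declared virtual crossings. By construction the resulting virtual knot diagram has Gauss diagram $G$, so, granting uniqueness, the assignment ``Gauss diagram $\mapsto$ virtual knot'' is well defined.

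For uniqueness the central device is the \emph{detour move}: if $\alpha$ is a sub-arc of a virtual diagram whose interior meets the rest of the diagram in virtual crossings only, then $\alpha$ may be replaced by any arc with the same endpoints that again meets the rest of the diagram in virtual crossings only. I would first record the standard fact that every detour move is a finite composition of $\Omega'_1,\Omega'_2,\Omega'_3$ and $\Omega^s_3$ together with planar isotopy: the three purely virtual moves let the moving strand $\alpha$ be slid past virtual crossings and past the other virtual strands freely, while $\Omega^s_3$ is precisely the move that carries $\alpha$ across a classical crossing of the diagram, producing two new virtual crossings, one on each of the two strands through that crossing. In particular, diagrams related by detour moves are equivalent virtual knots; and since a detour move never alters a classical crossing, this is consistent with two such diagrams having the same Gauss diagram.

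It remains to see that two virtual knot diagrams $D_1,D_2$ with $\mathrm{Gauss}(D_1)=\mathrm{Gauss}(D_2)=G$ differ by detour moves. Shrink every classical crossing of $D_i$ to a tiny disk; outside these $n$ disks the diagram becomes a disjoint union of arcs — the \emph{virtual strands} — each immersed in the plane with only virtual crossings, and the data recording which boundary point of which crossing-disk is joined to which, together with the cyclic order in which the virtual strands are traversed, is read off from $G$ and hence is the same for $D_1$ and $D_2$. Fix once and for all a ``model'' diagram $D_G$ realizing $G$, with its crossing-disks in $n$ fixed positions of the plane and its virtual strands in a chosen routing. Using planar isotopy together with detour moves I would first move the crossing-disks of $D_1$ to the model positions (dragging along their attached virtual strands, which amounts to a sequence of detour moves) and then push the virtual strands, one at a time, onto the model routing; each such push is a single detour move, since an arc with prescribed endpoints can be replaced, through virtual crossings, by any other arc with the same endpoints. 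The same procedure carries $D_2$ to $D_G$, so $D_1$ and $D_2$ are joined by detour moves, hence by $\Omega'_1,\Omega'_2,\Omega'_3,\Omega^s_3$, and therefore represent the same virtual knot.

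The step I expect to be the real obstacle is the detour-move lemma of the second paragraph, and inside it the passage of a virtual strand across a classical crossing: one must turn a single such passage into an explicit word in $\Omega^s_3$ and the auxiliary virtual moves that set it up and clean it up, and one must check that rerouting several virtual strands in turn never gets stuck. The point that makes the whole scheme go through is that a virtual crossing carries no over/under information, so the order in which strands are moved, and the way new virtual crossings are created, does not matter; granting the lemma, what is left is an elementary isotopy-of-arcs argument in the plane.
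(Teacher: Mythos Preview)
The paper does not actually prove this theorem: it is quoted verbatim from reference~[1] (Goussarov--Polyak--Viro), introduced by the sentence ``the following theorem was proved in~[1]'', and no argument is supplied. So there is no proof in the paper to compare against.

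That said, your outline is essentially the argument of~[1] itself: realize an arbitrary Gauss diagram by placing the classical crossings in disjoint disks and joining their endpoints by arcs whose mutual intersections are declared virtual; then show uniqueness by reducing any two realizations to a common model via detour moves, having first observed that a detour move decomposes into $\Omega'_1,\Omega'_2,\Omega'_3,\Omega^s_3$. The sketch is correct in spirit and the identification of the delicate step (factoring the passage of a virtual strand over a classical crossing through $\Omega^s_3$ and auxiliary virtual moves) is apt. One small point worth tightening: when you drag the crossing-disks of $D_1$ to the model positions you are using an ambient planar isotopy of the whole diagram, which may temporarily create or destroy intersections among virtual strands and may slide virtual strands across classical crossings; you should say explicitly that such an isotopy is itself a composition of detour moves (equivalently, of $\Omega'_1,\Omega'_2,\Omega'_3,\Omega^s_3$), not merely that the subsequent rerouting of individual strands is.
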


Now we give a short review of the odd writhe of a virtual knot. For a classical knot diagram with $n$ crossings, there are totally $2n$ vertices on the circle of the associated Gauss diagram. It is easy to find that each chord flanks an even number of vertices in the Gauss diagram. Given a virtual knot diagram, we can obtain a Gauss diagram similarly $($without considering the virtual crossing points$)$. However in virtual knot theory, it is possible that there exists one chord in the Gauss diagram such that there are odd number of vertices on both sides of it. We name the corresponding real crossing an \emph{odd} crossing, the associated chord an \emph{odd} chord. Given a virtual knot diagram $K$, similar to [3] we use $Odd(K)$ to denote all the odd crossings of it. If a real crossing $c_j\overline{\in} Odd(K)$, we say it is \emph{even}. Let $w(c_i)$ be the writhe of the crossing point $c_i$, we usually abuse our notation, let $c_i$ also denote both the real crossing and the corresponding chord in the Gauss diagram. Then the \emph{odd writhe} of $K$ can be defined as
\bc
$J(K)=\sum\limits_{c_i\in Odd(K)}w(c_i)$.
\ec
It is not difficult to show that $J(K)$ is an invariant of virtual knots. As an example, the two crossing points in the virtual trefoil diagram are both odd, hence $J(K)=2$. Since classical knot has zero odd writhe, then we see that the virtual trefoil is non-classical and hence non-trivial.

Given a virtual knot diagram $K$ with real crossings points $\{c_1, \cdots, c_n\}$, they correspond to $2n$ vertices $\{c^+_1, c^-_1, \cdots, c^+_n, c^-_n\}$ on the circle of the Gauss diagram, where $c^+_i (c^-_i)$ denotes the preimage of the overcrossing $($undercrossing$)$ point of $c_i$. These $2n$ vertices will divides the circle into $2n$ arcs, say $\{a_1, \cdots, a_{2n}\}$. In order to define our polynomial invariant $f_K(t)$, we first assign an integer $N(a_i)$ to each arc $a_i$ as follows. Choose a point in $a_i$, go along the circle according to the orientation $($anti-clockwise$)$. For some chords $c_i$, we will meet $c^+_i$ earlier than $c^-_i$. Then we define $N(a_i)$ to be the sum of $w(c_i)$ for all the chords $\{c_i\}$ which satisfy this condition. It is easy to find that the assigned numbers of two adjacent arcs are different by 1, see the figure below:
\begin{center}
\includegraphics{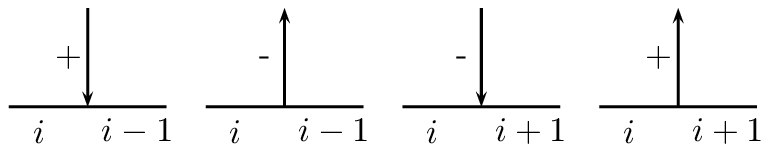} \centerline{\small Figure
3\quad}
\end{center}

Consider a chord $c_i$ in the Gauss diagram, the assigned numbers near $c^+_i$ and $c^-_i$ can be described as below:
\begin{center}
\includegraphics{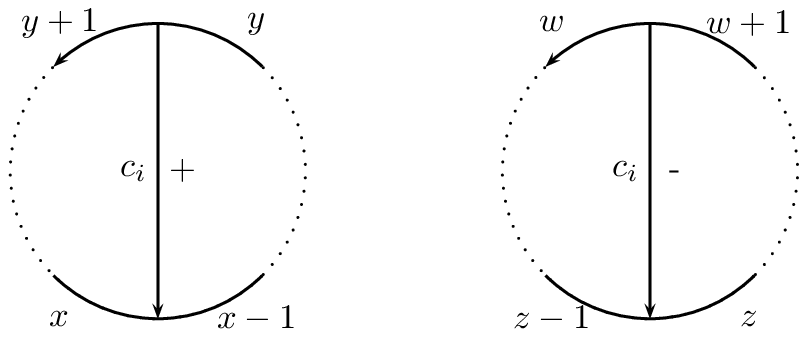} \centerline{\small Figure
4\quad}
\end{center}
Then we assign an integer $N(c_i)$ to each chord $c_i$ by
\bc
$N(c_i)=
\begin{cases}
x-y& \text{if $w(c_i)=+1$;}\\
z-w& \text{if $w(c_i)=-1$.}
\end{cases}$
\ec
Now we can define the \emph{odd writhe polynomial} of $K$ to be
\bc
$f_K(t)=\sum\limits_{c_i\in Odd(K)}w(c_i)t^{N(c_i)}.$
\ec

We note that when $c_i$ is odd, $N(c_i)$ is an even integer. It is obvious that $f_K(\pm1)=J(K)$. Similar to $J(K)$, $f_K(t)=0$ if $K$ contains no odd crossing, hence $f_K(t)$ can be used to detect non-classicality, and hence non-triviality. We will discuss more properties of $f_K(t)$ in Section 4. In the next section we will show that $f_K(t)$ is a virtual knot invariant. We end this section with an elementary but important property about $J(K)$.
\begin{lemma}
$J(K)$ is even for all virtual knots.
\end{lemma}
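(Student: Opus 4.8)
The plan is to reduce the claim to an elementary parity count on the Gauss diagram of $K$. Since every real crossing has writhe $w(c_i)=\pm1$, we have $w(c_i)\equiv 1 \pmod 2$, and therefore $J(K)=\sum_{c_i\in Odd(K)}w(c_i)\equiv |Odd(K)| \pmod 2$, where $|Odd(K)|$ denotes the number of odd crossings of $K$. Hence it suffices to prove that $|Odd(K)|$ is even; the signs of the chords play no role beyond this reduction.

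To control $|Odd(K)|$, I would translate the notion of an odd chord into the language of chord crossings. Fix a chord $c_i$ of the Gauss diagram and consider the $2n-2$ endpoints belonging to the remaining $n-1$ chords. Each such chord $c_j$ either lies entirely on one side of $c_i$, contributing $2$ endpoints to that side and $0$ to the other, or else has one endpoint on each side of $c_i$, i.e.\ $c_j$ crosses $c_i$ in the Gauss diagram, contributing exactly $1$ endpoint to each side. Consequently the number of endpoints flanking $c_i$ on a given side is congruent modulo $2$ to the number $\ell_i$ of chords that cross $c_i$. In particular $c_i\in Odd(K)$ if and only if $\ell_i$ is odd.

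Now $\sum_{i=1}^{n}\ell_i$ counts each crossing pair of chords exactly twice, hence is even; therefore the number of indices $i$ with $\ell_i$ odd, which is precisely $|Odd(K)|$, is even. Combined with the first paragraph this gives $J(K)\equiv 0 \pmod 2$. Equivalently, one may phrase the last step as the handshake lemma applied to the intersection graph on the vertex set $\{c_1,\dots,c_n\}$ in which two chords are joined by an edge exactly when they cross: its odd-degree vertices are precisely the odd chords, and a finite graph has an even number of odd-degree vertices.

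The only point that needs a little care is the case distinction in the second paragraph: one must check that the two cases (a chord of the pair lying on one side of $c_i$, versus crossing $c_i$) are exhaustive and mutually exclusive. This is exactly where the hypothesis that $K$ is a \emph{knot} enters, since then the underlying Gauss diagram is a single oriented circle and ``the two sides of $c_i$'' is unambiguous. Granting this, the remainder is pure parity bookkeeping and poses no real obstacle.
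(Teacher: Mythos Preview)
Your argument is correct. The reduction $J(K)\equiv |Odd(K)|\pmod 2$ is immediate, the identification of odd chords with chords of odd crossing number $\ell_i$ is exactly the content of the definition, and the handshake lemma finishes the count cleanly.

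Your route differs from the paper's. The paper proves the lemma by induction on the number of real crossings: one deletes a chord $c_{n+1}$, notes that the chords whose parity flips are precisely those crossing $c_{n+1}$, and then checks in two cases (according to whether $c_{n+1}$ itself is odd or even) that $J(K)$ and $J(K')$ differ by an even integer. Your proof bypasses the induction and the case split entirely by passing to the intersection graph of the chord diagram and invoking the handshake lemma. This is shorter and more conceptual: it makes transparent that the statement has nothing to do with signs or orientations of chords and is purely a fact about chord diagrams. The inductive argument, on the other hand, is closer in spirit to how one actually manipulates Gauss diagrams (deleting a crossing and tracking the effect), which is the viewpoint used elsewhere in the paper; but for this particular lemma your global double-counting is the cleaner proof.
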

\begin{proof}
We prove this lemma by induction on the real crossing number of the virtual knot diagram. When the diagram $K$ contains 0 or 1 real crossing point, the result follows obviously. Assuming when $K$ contains $n$ real crossing points, the conclusion is correct. Now we consider the case $K$ contains $n+1$ real crossing points.

Let $\{c_1, \cdots, c_{n+1}\}$ be the chords in the Gauss diagram of $K$, consider the chord $c_{n+1}$. We continue our discussion in two cases:
\begin{itemize}
  \item $c_{n+1}$ is even. We use $K'$ to denote the virtual knot which is obtained by removing $c_{n+1}$ from the Gauss diagram of $K$ $($replace it by a virtual crossing on the diagram$)$. Then by induction, we have $J(K')$ is even. Since $c_{n+1}$ is not odd, there are even number of chords intersecting $c_{n+1}$ in the Gauss diagram of $K$. Each chord from these even number chords has different parities in $K'$ and $K$, while each chord from the rest has the same parity in $K'$ and $K$. Together with $c_{n+1}$ is not odd, hence $J(K)$ is different from $J(K')$ by an even integer, hence is also even.
  \item $c_{n+1}$ is odd. In this case, there are odd number of chords intersecting $c_{n+1}$ in the Gauss diagram of $K$, hence without considering the contribution of $w(c_{n+1})$, $J(K)$ and $J(K')$ have different parities. Together with $c_{n+1}$ is odd, it follows that $J(K)$ is an even integer.
\end{itemize}
\end{proof}

\section{The invariance of the odd writhe polynomial}
In this section we will prove the theorem below:
\begin{theorem}
The odd writhe polynomial $f_K(t)$ is a virtual knot invariant.
\end{theorem}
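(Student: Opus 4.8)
The plan is to use the fact, recorded around Theorem 2.1 and the remark preceding it, that the polynomial $f_K(t)$ is read off entirely from the Gauss diagram of $K$: the chords, the signs $w(c_i)$, the parities that single out $Odd(K)$, and the integers $N(c_i)$, which themselves depend only on the arc labels $N(a_j)$ and hence on no auxiliary choice. Consequently $f_K(t)$ is unchanged by any generalized Reidemeister move that does not alter the Gauss diagram, and in particular by $\Omega'_1,\Omega'_2,\Omega'_3$ and $\Omega^s_3$. It therefore suffices to prove that $f_K(t)$ is unaffected when the Gauss diagram is modified according to the three classical moves $\Omega_1$, $\Omega_2$, $\Omega_3$; and, appealing to the standard reduction of Reidemeister moves to a small generating set, it is enough to treat one representative of each, with a fixed choice of orientations and of over/under information.

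I would dispose of $\Omega_1$ first. On Gauss diagrams it inserts or deletes a chord $c$ whose two feet are adjacent on the circle. Such a chord flanks no vertices on one side, so it is even and never enters the sum defining $f_K(t)$; moreover every other chord has both of its feet on the same side of the pair $\{c^+,c^-\}$, so no parity changes and $Odd(K)$ is untouched. Finally, inserting $c$ shifts the arc labels $N(a_j)$ by the constant $w(c)$ on all arcs but the tiny one between $c^+$ and $c^-$; hence, for every surviving chord $c_j$, the two arc labels whose difference is $N(c_j)$ change by the same amount and $N(c_j)$ is preserved. So $f_K(t)$ does not change.

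For $\Omega_2$ the move inserts or deletes two chords $c_{n+1},c_{n+2}$ with $w(c_{n+1})=-w(c_{n+2})$ whose four feet lie in (at most two) arcs of the original circle and which meet exactly the same set of other chords. A short local analysis shows that $c_{n+1}$ and $c_{n+2}$ always have the same parity; if both are even there is nothing to check beyond the invariance of the remaining data; if both are odd, the same analysis gives $N(c_{n+1})=N(c_{n+2})$, so that $w(c_{n+1})t^{N(c_{n+1})}+w(c_{n+2})t^{N(c_{n+2})}=0$ and the two new chords contribute nothing. As before, the surviving chords are unaffected: the two inserted chords carry opposite signs, so their net effect on each relevant arc label—and hence on each difference $N(c_j)$—cancels.

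The main work, and the step I expect to be the real obstacle, is $\Omega_3$. Here no chord is created or destroyed and no sign changes; what changes is the cyclic arrangement of the six feet of the three chords forming the triangle, together with the labels $N(a_j)$ on the arcs inside it. I would draw the ``before'' and ``after'' local Gauss diagrams, record the external label data feeding into the region, and then verify, one chord at a time: that its parity—hence its membership in $Odd(K)$—is the same before and after, and that, when it is odd, the integer $N(c_i)$ read off from the Figure 4 picture, as $x-y$ or $z-w$ according to the sign, comes out the same on both sides. Chords not meeting the triangle are handled exactly as in the earlier cases. None of this is conceptually hard, but the $\Omega_3$ bookkeeping—tracking all the arc labels across the move for each of the three chords, and across the several variants of the oriented move—is where the proof really has to be carried out with care.
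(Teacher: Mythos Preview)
Your approach is essentially the paper's own: reduce to Gauss-diagram moves via Theorem~2.1, invoke a finite generating set of classical Reidemeister moves, and then verify for each generator that parities, signs, and the integers $N(c_i)$ are preserved (with the two new chords in $\Omega_2$ cancelling when odd). The paper cites Polyak's generating set $\{\Omega_{1a},\Omega_{1b},\Omega_{2a},\Omega_{3a}\}$ explicitly and carries out exactly this check, with the $\Omega_3$ case split into the two possible Gauss-diagram configurations.

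One caution on your reduction step: the sentence ``it is enough to treat one representative of each'' is not literally correct. Polyak's minimal generating set contains \emph{two} first Reidemeister moves, $\Omega_{1a}$ and $\Omega_{1b}$, and these behave differently on the arc labels: for one of them the labels $N(a_j)$ outside the small arc are unchanged, for the other they all shift by one. Your $\Omega_1$ argument is phrased generally enough (a uniform additive shift on all surviving arc labels, hence no change in the differences $N(c_j)$) that it in fact covers both cases, so there is no actual gap; but you should cite the four-move generating set rather than asserting that a single representative per move type suffices.
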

\begin{proof}
According to Theorem 2.1, it suffices to prove that $f_K(t)$ is invariant under the corresponding Reidemeister moves on the Gauss diagram. In [6], M. Polyak proved that all the classical Reidemeister moves can be realized by a generating set of four Reidemeister moves: $\{\Omega_{1a}, \Omega_{1b}, \Omega_{2a}, \Omega_{3a}\}$, see the figure below. Hence it suffices to show that $f_K(t)$ is invariant under $\Omega_{1a}, \Omega_{1b}, \Omega_{2a}$ and $\Omega_{3a}$.
\begin{center}
\includegraphics{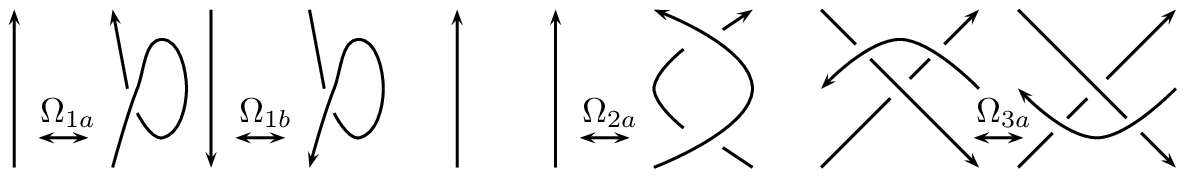} \centerline{\small Figure
5\quad}
\end{center}

First let us consider $\Omega_{1a}$ and $\Omega_{1b}$. It is easy to find that the chord representing the new crossing has no intersection with other chords. Let $a_i$ be the arc where the preimages of the new crossing located in. If $N(a_i)=k$, the figure below describes the corresponding transformation of $\Omega_{1a}$ and $\Omega_{1b}$ on the Gauss diagram.
\begin{center}
\includegraphics{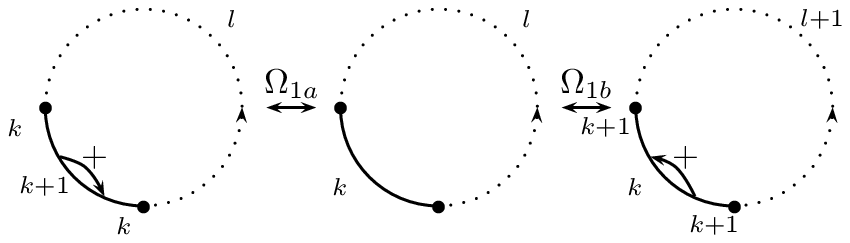} \centerline{\small Figure
6\quad}
\end{center}
For the $\Omega_{1a}$-move, the assigned number of all other arcs are preserved, since the new chord is not odd, hence it has no contribution to the odd writhe polynomial. The odd writhe polynomial is invariant. For the $\Omega_{1b}$-move, the assigned number of all other arcs are increased by one, according to the definition of $f_K(t)$, the odd writhe polynomial is also invariant.

Second we prove the odd writhe polynomial is preserved under $\Omega_{2a}$. Similarly let us consider the behavior of the $\Omega_{2a}$-move on the Gauss diagram. Note that if we use $c_m$ and $c_n$ to denote the two new crossing points $($and the associated chords$)$, then $c^+_m, c^+_n$ are both located in one arc $a_j$, and $c^-_m, c^-_n$ are also both located in one arc $a_i$. Without loss of generalization, we can suppose that $N(a_i)=i$ and $N(a_j)=j$. The figure below shows the corresponding transformation of $\Omega_{2a}$ on the Gauss diagram.
\begin{center}
\includegraphics{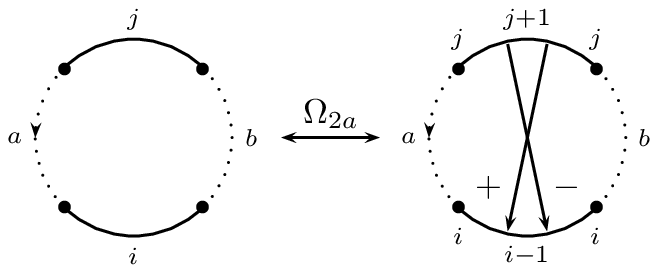} \centerline{\small Figure
7\quad}
\end{center}
On the right side of Figure 7, it is evident that either $c_m$ and $c_n$ are both odd, or $c_m$ and $c_n$ are both even. If they are both even, since the assigned number of other arcs are invariant, it follows that the odd writhe polynomial is invariant. If $c_m$ and $c_n$ are both odd, it suffices to consider their contribution to the odd writhe polynomial. According to the definition of $f_K(t)$, the contribution of $c_m$ and $c_n$ equals to $t^{i-j}-t^{i-j}=0$. As a result, the odd writhe polynomial is preserved under the $\Omega_{2a}$-move.

Finally let us consider the $\Omega_{3a}$-move. Similar as the above, we need to investigate the effect of $\Omega_{3a}$ on Gauss diagram. Different from the previous $\Omega_{1a}, \Omega_{1b}, \Omega_{2a}$, when considering the $\Omega_{3a}$-move, the Gauss diagram has two different possibilities, according to the structure of the diagram outside the local given diagram in Figure 5. The figure below gives a description of the behavior of $\Omega_{3a}$ on these two cases.
\begin{center}
\includegraphics{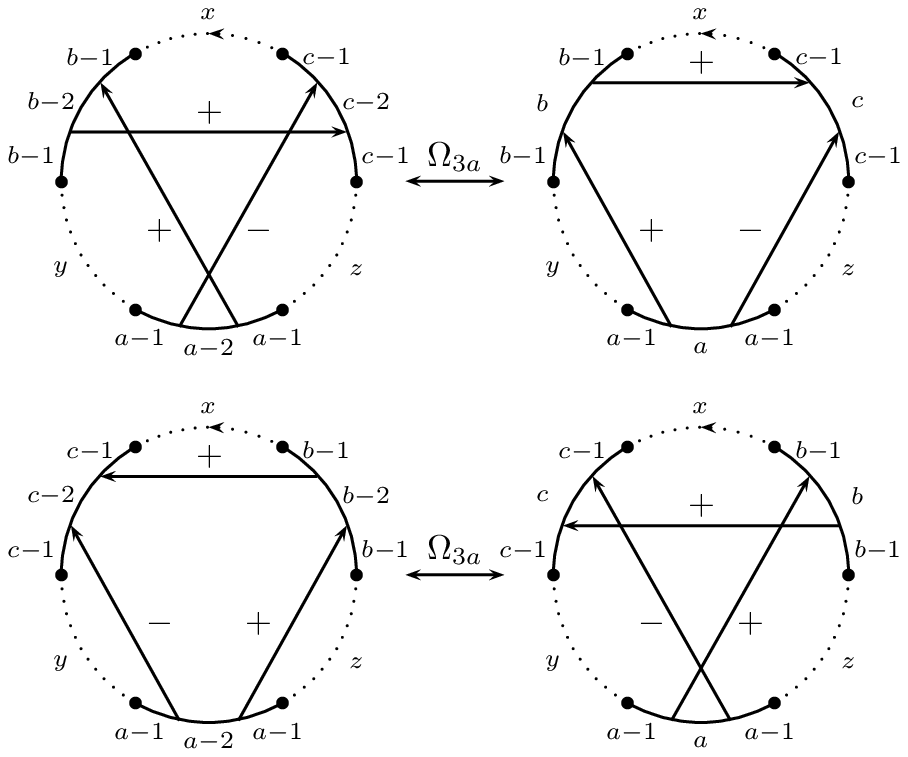} \centerline{\small Figure
8\quad}
\end{center}
From the figure above it is not difficult to observe that for each chord $c_i$, the assigned number $N(c_i)$ is invariant. Because the writhe and the parity of each crossing are also preserved, the odd writhe polynomial is kept under the $\Omega_{3a}$-move. In conclusion, the odd writhe polynomial is invariant under $\Omega_{1a}, \Omega_{1b}, \Omega_{2a}$ and $\Omega_{3a}$, hence the proof is finished.
\end{proof}

We end this section with a simple example. Consider the virtual trefoil given in Figure 2. After assigning all the arcs and chords, we have the figure below, it follows that $f_K(t)=t^2+1$.
\begin{center}
\includegraphics{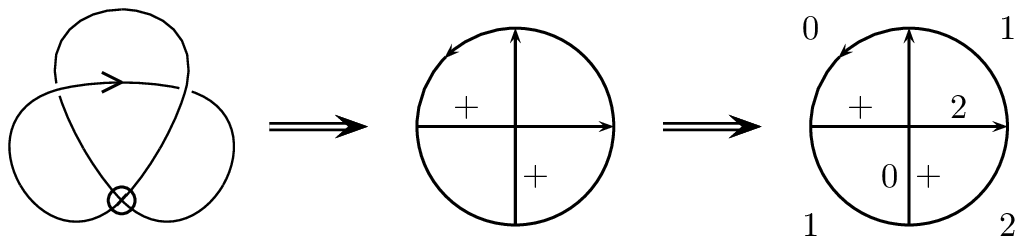} \centerline{\small Figure
9\quad}
\end{center}

\section{Some properties of the odd writhe polynomial}
\subsection{Compare with the odd writhe}
Although the odd writhe polynomial is a generalization of the odd writhe, if $J(K)=0$ always implies $f_K(t)=0$, there is little interest of $f_K(t)$. Fortunately this is not the truth, the following instance gives an example of this.

\begin{example}
\emph{The odd writhe of the virtual knot below is trivial, but its odd writhe polynomial is non-trivial.}
\end{example}
\begin{center}
\includegraphics{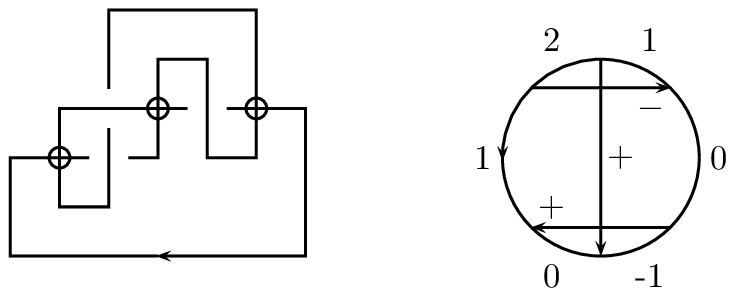} \centerline{\small Figure
10\quad}
\end{center}
It is obvious that the Gauss diagram above contains two odd chords, one has writhe +1, the other one has writhe -1, hence its odd writhe is 0. On the other hand, the two odd chords will contribute $+t^{1-(-1)}$ and $-t^{1-1}$ to the odd writhe polynomial respectively, therefore the odd writhe polynomial of it equals to $t^2-1$, which is non-trivial.

Given a virtual knot, suppose its odd writhe polynomial is $f_K(t)=\sum\limits a_it^i$, we define the degree of $f_K(t)$ to be
\bc
Deg $f_K(t)=\max\{|i| \mid a_i\neq0\}$.
\ec
In virtual knot theory, given a virtual knot diagram $D$ there are three kinds of crossing number: the number of real crossings $c_r(D)$, the number of virtual crossings $c_v(D)$ and the number of total crossings $c(D)$. Among all the virtual diagrams of a virtual knot $K$, the minimal values of them are invariants of $K$, say the real crossing number $c_r(K)$, the virtual crossing number $c_v(K)$ and the crossing number $c(K)$ respectively. The relations of these three invariants and the lower bounders of them are very interesting and important problems in virtual knot theory. The next proposition gives a simple lower bounder of the real crossing number.
\begin{proposition}
Given a virtual knot $K$, we have $c_r(K)\geqslant$ Deg $f_K(t)$.
\end{proposition}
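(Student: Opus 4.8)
The plan is to prove the sharper statement that $|N(c_i)|\le c_r(D)$ for every odd chord $c_i$ in the Gauss diagram of any virtual diagram $D$ of $K$, and then to conclude by taking $D$ with as few real crossings as possible. The starting point is an elementary reformulation of the assigned numbers. If $D$ has real crossings $c_1,\dots,c_n$, then by construction the integer $N(a)$ attached to an arc $a$ is exactly $\sum_{c_j\in S(a)}w(c_j)$, where $S(a)\subseteq\{c_1,\dots,c_n\}$ is the set of chords whose overcrossing endpoint is encountered strictly before its undercrossing endpoint as one travels anticlockwise starting from a point of $a$; in particular $N(a)$ is a sum of at most $n$ terms, each equal to $\pm1$.

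I would then apply this to the chord invariant $N(c_i)$. By definition $N(c_i)$ is the difference of two of the assigned numbers of the arcs incident to the endpoints $c_i^{+},c_i^{-}$ (namely $x-y$ when $w(c_i)=+1$ and $z-w$ when $w(c_i)=-1$, in the notation of Figure 4), and each of $x,y,z,w$ is the assigned number of an actual arc. Writing the two relevant assigned numbers as $\sum_{c_j\in S_x}w(c_j)$ and $\sum_{c_j\in S_y}w(c_j)$, the chords common to $S_x$ and $S_y$ cancel in the difference, so
\[
N(c_i)=\sum_{c_j\in S_x\setminus S_y}w(c_j)-\sum_{c_j\in S_y\setminus S_x}w(c_j),
\]
which is a signed sum of $|S_x\triangle S_y|\le n$ terms equal to $\pm1$; hence $|N(c_i)|\le n=c_r(D)$. (A closer look shows that $S_x\triangle S_y$ is $\{c_i\}$ together with the chords crossing $c_i$, which recovers the earlier remark that $N(c_i)$ is even for odd $c_i$, but only the crude bound $|S_x\triangle S_y|\le n$ is needed.) Consequently every exponent appearing in $f_D(t)=\sum_{c_i\in Odd(D)}w(c_i)t^{N(c_i)}$ has absolute value at most $c_r(D)$, so after collecting like terms $\mathrm{Deg}\,f_K(t)=\mathrm{Deg}\,f_D(t)\le c_r(D)$ (with the convention that the zero polynomial has degree $0$, which also covers the case $Odd(D)=\varnothing$). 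Since $f_K(t)$ is a well-defined virtual knot invariant, this inequality holds for every virtual diagram $D$ of $K$, and taking the minimum of $c_r(D)$ over all of them gives $\mathrm{Deg}\,f_K(t)\le c_r(K)$.

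I do not expect a genuine obstacle here: the argument is essentially bookkeeping built on the definition of the assigned numbers. The one point that needs care is reading off from Figure 4 precisely which two assigned numbers enter $N(c_i)$, so that the reduction of $N(c_i)$ to a difference of subset-sums of the $w(c_j)$ is legitimate; everything else is the triangle-inequality estimate above. It is worth noting that the bound is already attained by the virtual trefoil, where $c_r(K)=2=\mathrm{Deg}(t^2+1)=\mathrm{Deg}\,f_K(t)$, so this approach cannot be pushed to give $c_r(K)\ge \mathrm{Deg}\,f_K(t)+1$.
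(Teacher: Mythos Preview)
Your proposal is correct and takes essentially the same approach as the paper: both bound $|N(c_i)|$ by $c_r(D)$ for every diagram $D$ and then minimize over diagrams. The paper compresses this into a one-line appeal to Figure~3, whereas you unpack it via the symmetric-difference identity $N(c_i)=\sum_{c_j\in S_x\setminus S_y}w(c_j)-\sum_{c_j\in S_y\setminus S_x}w(c_j)$; the extra observation that $S_x\triangle S_y$ is $\{c_i\}$ together with the chords crossing $c_i$ is a nice bonus but, as you note, only the crude bound $|S_x\triangle S_y|\le n$ is needed.
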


\begin{proof}
The proof is evident, just consider the chord which realizes the max $|i|$, according to Figure 3, we have $c_r(D)\geqslant$ Deg $f_K(t)$ for any virtual diagram $D$ of $K$. The conclusion follows.
\end{proof}

It is easy to find that the absolute value of the odd writhe is also a lower bounder of $c_r$, which means that $c_r(K)\geqslant|J(K)|$. For example, the odd writhe of the virtual trefoil is 2, hence its real crossing number is exactly 2. In general, if the absolute value of the odd writhe of a diagram equals to the real crossing number of this diagram, then we can conclude that $c_r(D)=c_r(K)=|J(K)|$. Similarly if the degree of the odd writhe polynomial of a knot diagram equals to the real crossing number of this diagram, we also obtain that $c_r(D)=c_r(K)=$ Deg $f_K(t)$. However the condition for $c_r(K)=|J(K)|$ is very sharp, in fact $c_r(K)=|J(K)|$ if and only if there exists a diagram of $K$ with real crossing number $c_r$ such that all real crossing points are odd and the writhes of them are all the same. The next example shows that from the viewpoint of the odd writhe polynomial, the condition is much weaker.

\begin{example}
\emph{The odd writhe of the virtual knot below is trivial, but we can deduce that the real crossing number of it is 4, from the odd writhe polynomial.}
\end{example}
\begin{center}
\includegraphics{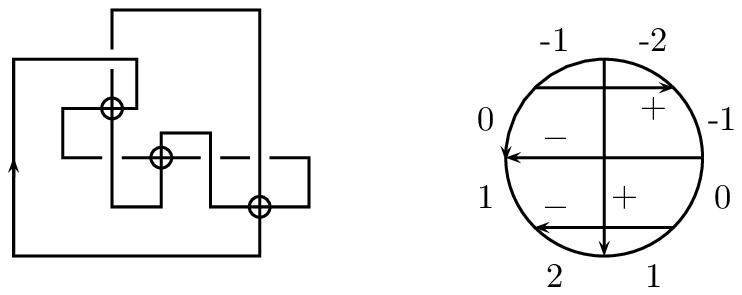} \centerline{\small Figure
11\quad}
\end{center}
Now all the four chords of the Gauss diagram above are odd, hence its odd writhe is 0. However the odd writhe polynomial of it equals to $t^{2-(-2)}-t^{2-0}-t^{1-(-1)}+t^{-1-(-1)}=t^4-2t^2+1$. It follows that the degree of $f_K(t)$ is 4, hence the real crossing number of the knot above is exactly 4.

\textbf{Remark} In fact a virtual knot $K$ satisfies $c_r(K)=$ Deg $f_K(t)=2k$ $(k>0$. For the case of $k<0$, the corresponding diagram can be obtained in the similar way$)$ only if $K$ has a diagram as follows:
\begin{center}
\includegraphics{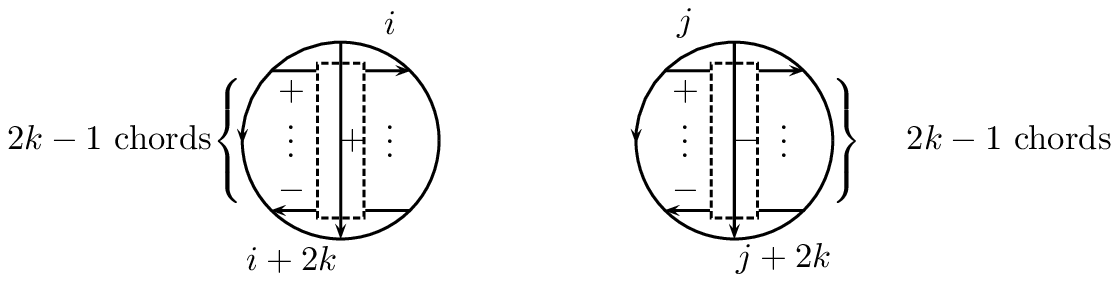} \centerline{\small Figure
12\quad}
\end{center}
On the left side of Figure 12, the positive vertical chord contributes the monomial $t^{2k}$ to the odd writhe polynomial. All other chords intersect the vertical one transversely, and all the chords that are oriented from left to right have writhe $+1$, the rests have writhe $-1$. The dotted frame means the positions of the $2k-1$ chords maybe very complicated, not need to be parallel. For the right side figure, the vertical chord has writhe $-1$, hence contributes $-t^{2k}$ to the odd writhe polynomial, all other chords are the same to the left figure. Hence we do not need that all the crossing points are odd or all of them have the same writhe.

\subsection{The relation between $f_{K^*}(t)$ and $f_K(t)$}
Given an oriented virtual knot $K$, by the \emph{inverse} of $K$, we mean the knot obtained from $K$ by reversing its orientation. Here we use $K^*$ to denote it. Now we want to investigate the relation between $f_{K^*}(t)$ and $f_K(t)$. In fact we have the proposition below.
\begin{proposition}
$f_{K^*}(t)=f_K(t^{-1})\cdot t^2$.
\end{proposition}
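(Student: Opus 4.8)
The plan is to follow the orientation reversal down to the Gauss diagram level and then read off what it does to the numbers $N(c_i)$. First I would observe that reversing the orientation of $K$ reverses the orientation of the circle in its Gauss diagram; since we always draw that circle anti-clockwise, this amounts to \emph{reflecting} the Gauss diagram of $K$ across a diameter, while keeping every arrowhead (the over/under data) and every sign $w(c_i)$ in place. Two consequences are immediate: the writhe of each crossing is unchanged, and a chord that flanks an odd number of vertices on one side still does so after reflection (reflection interchanges the two sides of a chord, but within the remaining $2n-2$ endpoints the two sides have equal parity). Hence $Odd(K^*)=Odd(K)$ and $w_{K^*}(c_i)=w_K(c_i)$ for every chord.

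Next I would compute the new arc labels. Put $W=\sum_{i=1}^{n}w(c_i)$, the total writhe of the diagram. Traversing the circle anti-clockwise in the Gauss diagram of $K^*$ is the same as traversing it clockwise in the Gauss diagram of $K$, so for a fixed arc $a$ a chord $c$ contributes to $N_{K^*}(a)$ exactly when we meet $c^-$ before $c^+$ in the anti-clockwise traversal for $K$. Since for each chord exactly one of $c^+,c^-$ is met first, this yields $N_{K^*}(a)+N_K(a)=W$ for every arc $a$.

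Then I would look at the four arcs adjacent to the two endpoints of a fixed odd chord $c_i$ (the arcs appearing in Figure 4). Reflection interchanges, at each of $c_i^+$ and $c_i^-$, the arc preceding it with the arc following it. Combining this interchange with the identity $N_{K^*}(a)=W-N_K(a)$, with the defining (writhe-dependent) formula for $N(c_i)$, and with the fact that the two arcs at each endpoint differ by $\pm1$ in a manner governed by $w(c_i)$, a short computation gives $N_{K^*}(c_i)=2-N_K(c_i)$ for every odd chord $c_i$; the same identity comes out of both cases $w(c_i)=\pm1$, the shift by $2$ being the difference between the ``$x-y$'' expression formed from the preceding arcs and the one formed from the following arcs.

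Putting these together,
\[
f_{K^*}(t)=\sum_{c_i\in Odd(K^*)}w(c_i)\,t^{\,N_{K^*}(c_i)}=\sum_{c_i\in Odd(K)}w(c_i)\,t^{\,2-N_K(c_i)}=t^{2}\sum_{c_i\in Odd(K)}w(c_i)\,(t^{-1})^{N_K(c_i)}=f_K(t^{-1})\cdot t^{2}.
\]
The routine parts — that writhe and parity are preserved and that $N_{K^*}(a)=W-N_K(a)$ — are essentially immediate. The only place requiring genuine care is the third step: the bookkeeping with the conventions of Figure 4 needed to pin down the exact value $N_{K^*}(c_i)=2-N_K(c_i)$, and in particular the constant $+2$. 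That is where I expect the main, though still elementary, obstacle to lie.
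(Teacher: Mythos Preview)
Your proposal is correct and follows essentially the same route as the paper: the paper also observes that writhe and parity are preserved, encodes the arc-label transformation (your $N_{K^*}(a)=W-N_K(a)$) in a figure where $w$ is the total writhe, and from this reads off that a chord contributing $\pm t^{\,i-j}$ in $K$ contributes $\pm t^{\,j-i+2}$ in $K^*$, i.e.\ exactly your $N_{K^*}(c_i)=2-N_K(c_i)$. Your more explicit bookkeeping with the before/after arcs is just a spelled-out version of what the paper leaves to the picture.
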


\begin{proof}
Choose a virtual diagram of $K$, say $D$. Consider the Gauss diagrams of $D$ and $D^*$, note that the writhe and crossing information of each real crossing point are preserved, the figure below describes the local transformation on the Gauss diagram, where $w$ denotes the writhe of the diagram $D$.
\begin{center}
\includegraphics{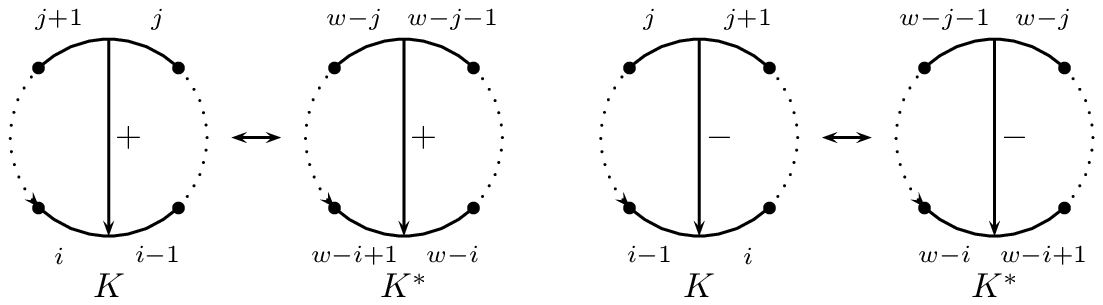} \centerline{\small Figure
13\quad}
\end{center}
From the figure above, it easy to find that a crossing is odd in $D$ if and only if it is also odd in $D^*$, and their contribute to the odd writhe polynomial are $\pm t^{i-j}$ and $\pm t^{j-i+2}$ respectively. It follows that $f_{K^*}(t)=f_K(t^{-1})\cdot t^2$.
\end{proof}

As an immediate corollary, we have:
\begin{corollary}
Given a virtual knot $K$, if $f_K(t)\neq a_nt^n+a_{n-2}t^{n-2}+\cdots+a_4t^4+a_2t^2+a_2+a_4t^{-2}+\cdots+a_{n-2}t^{4-n}+a_nt^{2-n}$, then $K\neq K^*$.
\end{corollary}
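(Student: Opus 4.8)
The plan is to read this off directly from the preceding proposition together with the fact, established in Section~3, that $f_K(t)$ is a virtual knot invariant. I would argue by contraposition, so suppose $K=K^*$. Then the invariance of $f_K$ gives $f_K(t)=f_{K^*}(t)$, and by the preceding proposition $f_{K^*}(t)=t^2 f_K(t^{-1})$. Hence $K=K^*$ forces the identity $f_K(t)=t^2 f_K(t^{-1})$, and the corollary reduces to determining which Laurent polynomials are fixed by the involution $g(t)\mapsto t^2 g(t^{-1})$.

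Next I would compare coefficients. Writing $f_K(t)=\sum_k a_k t^k$, we have $t^2 f_K(t^{-1})=\sum_k a_k t^{2-k}$, whose coefficient of $t^m$ is $a_{2-m}$; so the identity $f_K(t)=t^2 f_K(t^{-1})$ is equivalent to $a_m=a_{2-m}$ for every integer $m$. Since it was already noted that $N(c_i)$ is even whenever $c_i$ is odd, only even powers of $t$ occur in $f_K(t)$, so the content is the family of relations $a_{2j}=a_{2-2j}$. These pair the exponents as $\{0,2\}$, $\{-2,4\}$, $\{-4,6\}$, and in general $\{2-2j,\,2j\}$ for $j\geq 1$, which is exactly the symmetry pattern in the statement: if $n$ denotes the degree of $f_K(t)$ (an even integer), then labelling by $a_n,a_{n-2},\dots,a_2$ the common values on the pairs $\{t^n,t^{2-n}\},\{t^{n-2},t^{4-n}\},\dots,\{t^2,t^0\}$ reproduces the displayed polynomial $a_nt^n+a_{n-2}t^{n-2}+\cdots+a_2t^2+a_2+a_4t^{-2}+\cdots+a_{n-2}t^{4-n}+a_nt^{2-n}$.

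Contraposing then finishes the argument: if $f_K(t)$ is not of this form, then $f_K(t)\neq t^2 f_K(t^{-1})=f_{K^*}(t)$, so the invariant $f_K$ separates $K$ from $K^*$ and therefore $K\neq K^*$. I do not expect a genuine obstacle here, since the statement is only an immediate corollary; the only step needing care is the bookkeeping that turns the coefficient relations $a_{2j}=a_{2-2j}$ into the precise Laurent polynomial written in the corollary — in particular, checking that the top term $t^n$ is matched with $t^{2-n}$ (rather than, say, $t^{-n}$), and that the two central monomials are $a_2t^2$ and $a_2$.
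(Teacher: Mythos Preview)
Your argument is correct and is exactly the intended one: the paper states this result as an immediate corollary of Proposition~4.4 without further proof, and your contrapositive reading of $f_K(t)=t^2 f_K(t^{-1})$ together with the coefficient comparison $a_m=a_{2-m}$ is precisely the unpacking that justifies it. The bookkeeping you flag (top term $t^n$ paired with $t^{2-n}$, central pair $a_2t^2+a_2$) is handled correctly.
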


As an example, consider the virtual knot in Example 4.3, its odd writhe polynomial is $t^4-2t^2+1$, and the odd writhe polynomial of its inverse is $t^2-2+t^{-2}$. Hence the knot in Example 4.3 is non-invertible. Note that the odd writhe of a virtual knot is kept under reversing the orientation.

\subsection{The relation between $f_{\overline{K}}(t)$ and $f_K(t)$}
Given a virtual knot $K$, we define the \emph{mirror image} of $K$, say $\overline{K}$, to be the knot which is obtained from $K$ by switching all the real crossing points. The following proposition shows that the odd writhe polynomial sometimes can be used to distinguish a virtual knot from its mirror image.
\begin{proposition}
$f_{\overline{K}}(t)=-f_K(t^{-1})\cdot t^2$.
\end{proposition}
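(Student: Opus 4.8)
The plan is to imitate the proof of Proposition 4.4 and read off the effect of the map $K\mapsto\overline K$ on the Gauss diagram of a fixed virtual diagram $D$ of $K$. Switching every real crossing of $D$ has two effects on the associated Gauss diagram: it reverses the orientation of every chord, since at each crossing the preimage of the overcrossing and that of the undercrossing are interchanged, and it negates the sign $w(c_i)$ of every chord; the circle, its anti-clockwise orientation, and the positions of the $2n$ vertices are all unchanged. In particular the number of chords crossing a given chord is unaffected, so $Odd(\overline D)=Odd(D)$, and it is enough to see how the monomial $w(c_i)\,t^{N(c_i)}$ attached to a fixed $c_i\in Odd(D)$ changes.

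First I would record the effect on the arc labels. Reversing the orientation of a chord interchanges the roles of its two preimages, so for every arc $a$ the set of chords counted towards $N_{\overline D}(a)$ is exactly the complement of the set counted towards $N_D(a)$; since those contributions are also negated, $N_{\overline D}(a)=N_D(a)-w(D)$, where $w(D)=\sum_i w(c_i)$ is the total writhe of $D$. The delicate point is then the behaviour of $N(c_i)$ itself: because the sign of $c_i$ is reversed, in $\overline D$ the integer $N(c_i)$ is computed by the \emph{other} branch of its definition (the ``$z-w$'' branch when $c_i$ was positive, the ``$x-y$'' branch when it was negative), and at the same time the endpoints playing the roles of $c_i^{+}$ and $c_i^{-}$ in Figure 4 have been swapped. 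Drawing the local picture analogous to Figure 13, using that the labels of two adjacent arcs differ by $\pm1$, and substituting $N_{\overline D}(a)=N_D(a)-w(D)$ to pass between the relevant arcs, one checks that the two occurrences of $w(D)$ cancel and that $N_{\overline D}(c_i)=2-N_D(c_i)$, exactly the exponent change from the inverse case. This bookkeeping — keeping the relabelling of $c^{\pm}$ and the switch of defining branch consistent with each other — is the step I expect to be the main obstacle; the arithmetic afterwards is routine.

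Putting the two effects together, each $c_i\in Odd(D)$ now contributes $-w(c_i)\,t^{\,2-N(c_i)}$ to $f_{\overline D}(t)$, so
$$f_{\overline K}(t)=\sum_{c_i\in Odd(D)}\bigl(-w(c_i)\bigr)\,t^{\,2-N(c_i)}=-t^{2}\sum_{c_i\in Odd(D)}w(c_i)\,(t^{-1})^{N(c_i)}=-f_K(t^{-1})\cdot t^{2}.$$
As a check, evaluating at $t=1$ gives $J(\overline K)=-J(K)$, which is clear directly. One could instead try to deduce the statement from Proposition 4.4 by comparing the Gauss diagrams of $\overline K$ and $K^{*}$, but these are not literally the same diagram, so I would carry out the direct local analysis above.
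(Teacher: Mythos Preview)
Your argument is correct and follows essentially the same route as the paper: both analyse the effect of switching all real crossings on the Gauss diagram, observe that oddness is preserved and that a chord contributing $\pm t^{i-j}$ in $K$ contributes $\mp t^{\,j-i+2}$ in $\overline K$, and conclude. The paper compresses the bookkeeping into a local picture (Figure~14), whereas you make the global arc-label shift $N_{\overline D}(a)=N_D(a)-w(D)$ explicit and then note it cancels in the difference defining $N(c_i)$; this is a slightly more verbose path to the same identity $N_{\overline D}(c_i)=2-N_D(c_i)$, not a different method.
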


\begin{proof}
Consider the Gauss diagrams of $K$ and $\overline{K}$. Since the over-crossing and under-crossing will be switched and the writhe of each crossing will be changed, the local transformation on the Gauss diagram is given below. As above, here $w$ also denotes the writhe of $K$.
\begin{center}
\includegraphics{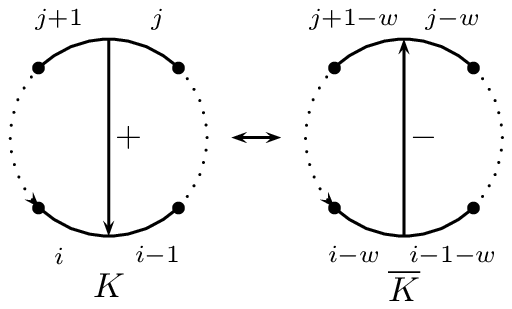} \centerline{\small Figure
14\quad}
\end{center}
From the figure above, if one chord contributes $\pm t^{i-j}$ to the odd writhe polynomial of $K$, the corresponding chord will contribute $\mp t^{j-i+2}$ to the odd writhe polynomial of $\overline{K}$. Note that a chord is odd in $K$ if and only if the corresponding chord is also odd in $\overline{K}$, the conclusion follows.
\end{proof}

The following direct corollary offers an obstruction for a virtual knot to be amphicheiral.
\begin{corollary}
Given a virtual knot $K$, if $f_K(t)\neq a_nt^n+a_{n-2}t^{n-2}+\cdots+a_4t^4+a_2t^2-a_2-a_4t^{-2}-\cdots-a_{n-2}t^{4-n}-a_nt^{2-n}$, then $K\neq \overline{K}$.
\end{corollary}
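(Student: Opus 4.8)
The plan is to obtain the corollary as an immediate consequence of Proposition 4.6, exactly as the phrase ``direct corollary'' indicates. Suppose $K=\overline{K}$. Then $f_K(t)=f_{\overline{K}}(t)$, and substituting this into Proposition 4.6 gives the functional equation
\[
f_K(t)=-f_K(t^{-1})\cdot t^{2}.
\]
Everything then reduces to reading off what this identity says about the coefficients of $f_K(t)$.

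Write $f_K(t)=\sum_i a_i t^i$, so that $-f_K(t^{-1})\cdot t^{2}=-\sum_i a_i t^{2-i}$. Comparing the coefficient of $t^j$ on the two sides of the functional equation yields $a_j=-a_{2-j}$ for every integer $j$. At this point I would also invoke the remark made just after the definition of $f_K(t)$: since $N(c_i)$ is even whenever $c_i$ is odd, every exponent occurring in $f_K(t)$ is even, i.e.\ $a_j=0$ for odd $j$. Combining the two observations, the nonzero coefficients live in even degrees and are matched in pairs by $a_j=-a_{2-j}$, which links $n$ with $2-n$, then $n-2$ with $4-n$, and so on down to $2$ with $0$ (the would-be fixed point $j=1$ being odd, hence harmless).

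Substituting these relations back into $f_K(t)=\sum_i a_i t^i$ and grouping the paired terms gives precisely
\[
f_K(t)=a_nt^{n}+a_{n-2}t^{n-2}+\cdots+a_4t^{4}+a_2t^{2}-a_2-a_4t^{-2}-\cdots-a_{n-2}t^{4-n}-a_nt^{2-n},
\]
where $n=\mathrm{Deg}\,f_K(t)$; taking the contrapositive is exactly the statement of the corollary. There is essentially no serious obstacle here: the only points requiring care are remembering to use the even-exponent remark so that odd-degree coefficients vanish automatically, and keeping the index bookkeeping straight when rewriting $a_j=-a_{2-j}$ in the symmetric form above (in particular matching the leading exponent $n$ with $2-n$ and checking that the signs come out as displayed). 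It is worth noting in passing that the identical computation applied to Proposition 4.4 in place of Proposition 4.6 produces Corollary 4.5, so this argument is simply the sign-flipped twin of that one.
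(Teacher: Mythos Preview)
Your argument is correct and is exactly the intended one: the paper does not spell out a proof but simply labels this a ``direct corollary'' of Proposition~4.6, and your derivation of the coefficient constraint $a_j=-a_{2-j}$ from the functional equation $f_K(t)=-f_K(t^{-1})\cdot t^2$, together with the even-exponent observation, is precisely how that implication unfolds.
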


Together with Proposition 4.4 and Proposition 4.6, we have:
\begin{corollary}
$f_K(t)=-f_{\overline{K}^*}(t)$.
\end{corollary}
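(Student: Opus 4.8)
The plan is to derive the identity purely formally from Propositions 4.4 and 4.6, with no further reference to Gauss diagrams. The first point I would record is that the two operations involved commute: reversing the orientation of a diagram and switching all of its real crossings are independent modifications, so $\overline{K^*}=\overline{K}^*$, and I will write simply $\overline{K}^*$ for the resulting virtual knot. This is the only place where one has to think about diagrams at all, and it is immediate.

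Next I would apply Proposition 4.4 with $\overline{K}$ in the role of $K$, which gives the relation $f_{\overline{K}^*}(t)=f_{\overline{K}}(t^{-1})\cdot t^2$. Then I would invoke Proposition 4.6, but after performing the substitution $t\mapsto t^{-1}$: since the equality $f_{\overline{K}}(t)=-f_K(t^{-1})\cdot t^{2}$ holds in $\mathbb{Z}[t,t^{-1}]$, replacing $t$ by $t^{-1}$ throughout yields $f_{\overline{K}}(t^{-1})=-f_K(t)\cdot t^{-2}$. Substituting this into the previous display gives $f_{\overline{K}}^*(t)=\bigl(-f_K(t)\,t^{-2}\bigr)\cdot t^{2}=-f_K(t)$, which is exactly the assertion $f_K(t)=-f_{\overline{K}^*}(t)$.

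There is no real obstacle here; the statement is a bookkeeping consequence of the two preceding propositions. The one genuine point of care is the Laurent substitution $t\mapsto t^{-1}$ in Proposition 4.6: it must be carried out both in the argument of $f_K$ and in the monomial factor $t^{2}$, so that $-f_K(t^{-1})t^{2}$ becomes $-f_K(t)t^{-2}$ rather than $-f_K(t)t^{2}$. As a consistency check one can run the computation in the other order — apply Proposition 4.6 to $K^*$ and then Proposition 4.4, using the substitution $t\mapsto t^{-1}$ in Proposition 4.4 — and verify that it produces the same conclusion.
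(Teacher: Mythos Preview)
Your proof is correct and is exactly the formal combination of Propositions 4.4 and 4.6 that the paper intends; the paper itself offers no further argument beyond citing those two propositions. One small typographical slip: in your final displayed computation you write $f_{\overline{K}}^*(t)$ where you mean $f_{\overline{K}^*}(t)$.
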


Now let us consider the virtual knot in Example 4.3 again, it is easy to find that the odd writhe polynomial of its mirror image is $-t^2+2-t^{-2}$. Hence $K$ is not equivalent to its mirror image. Note that although the odd writhe sometimes can distinguish a virtual knot from its mirror $($since $J(K)=-J(\overline{K})$$)$, in this case the odd writhe is trivial.

\subsection{The behavior under connected sum}
In virtual knot theory, the connected sum is not well-defined $($even for oriented virtual knots$)$, the result of a connected sum strongly depends on the choice of the place where the connection is made. For this reason, the notation $K_1\#K_2$ does not make sense in general. Very interesting, although the connected sum of virtual knots is not well-defined, the odd writhe polynomial is well defined under the ``connected sum" operation. Equivalently speaking, the odd writhe polynomial does not depend on the choice of the place where the connecting is made. In fact, we have the following proposition.
\begin{proposition}
Given two virtual knots $K_a$ and $K_b$, we have $f_{K_a\#K_b}(t)=f_{K_a}(t)+f_{K_b}(t)$. Here $K_a\#K_b$ denotes one connected sum of $K_a, K_b$ with an arbitrarily chosen of the connection place.
\end{proposition}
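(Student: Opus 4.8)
The plan is to carry everything out on the level of Gauss diagrams and to use Theorem 2.1 (invariance of $f_K$) together with Theorem~3.1. Fix virtual diagrams $D_a$ of $K_a$ and $D_b$ of $K_b$ and a choice of connection, and let $D=D_a\#D_b$ be the resulting diagram. On Gauss diagrams, $G(D)$ is obtained by cutting the circle of $G(D_a)$ at an interior point of some arc $a^{\ast}$, cutting the circle of $G(D_b)$ similarly, and splicing the two resulting intervals into a single circle. Thus the circle of $G(D)$ is the union of two complementary sub-arcs $S_a$ and $S_b$, where $S_a$ carries (both endpoints of) all chords of $D_a$ and $S_b$ all chords of $D_b$; in particular no chord of $D_a$ intersects a chord of $D_b$.

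First I would record the easy facts. The writhe of every real crossing is unchanged. For the parities: if $c$ is a chord coming from $D_a$, its two endpoints split the circle of $G(D)$ into two arcs, and since these endpoints lie in the interior of $S_a$, one of those arcs contains the whole block $S_b$. Hence passing from $G(D_a)$ to $G(D)$ adds the $2c_r(D_b)$ vertices of $S_b$ entirely to one side of $c$, an even number, so the two sides of $c$ keep their parities. Therefore $c$ is odd in $D$ if and only if it is odd in $D_a$, and symmetrically for chords of $D_b$; consequently $Odd(D)$ is the disjoint union of $Odd(D_a)$ and $Odd(D_b)$.

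The heart of the argument is to show that for every chord $c$ of $D_a$ the integer $N(c)$ computed in $G(D)$ equals the one computed in $G(D_a)$. I would prove this by showing that, restricted to the arcs contained in $S_a$, the function $N$ merely shifts by a global constant. Fix an arc $a$ inside $S_a$; traversing the circle of $G(D)$ anti-clockwise from $a$ we first exhaust $S_a$, then run through the entire block $S_b$, then return through the start of $S_a$. Hence: (i) for each chord $c_j$ of $D_b$ the relative order of $c_j^+$ and $c_j^-$ is the same no matter which arc of $S_a$ we start from, so the $D_b$-chords contribute a constant $C$ to $N(a)$, independent of $a\subset S_a$; and (ii) since $S_b$ contains no vertex of $D_a$, the relative order of the two endpoints of any $D_a$-chord is exactly as in the cyclic traversal of $G(D_a)$, so the $D_a$-chords contribute exactly the value of $N$ computed in $G(D_a)$. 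Thus $N(a)=N_{G(D_a)}(a)+C$ for every arc of $S_a$ (the two halves of the cut arc $a^{\ast}$ being assigned, consistently, the value of $a^{\ast}$). Since $N(c)$ is by definition a difference of the assigned numbers of two arcs adjacent to $c^+$ and $c^-$ — and these arcs correspond to arcs of $G(D_a)$ — the constant $C$ cancels, giving $N_{G(D)}(c)=N_{G(D_a)}(c)$. The mirror statement, with $S_a$ and $S_b$ interchanged, gives $N_{G(D)}(c)=N_{G(D_b)}(c)$ for the chords of $D_b$. (Alternatively, one can read off from Figure~4 and the ``adjacent arcs differ by $1$'' principle that $N(c)$ depends only on the chords that intersect $c$.)

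Putting the pieces together, and writing $f_D=f_{K_a\#K_b}$ by invariance,
\[
f_{D}(t)=\sum\limits_{c\in Odd(D_a)}w(c)\,t^{N(c)}+\sum\limits_{c\in Odd(D_b)}w(c)\,t^{N(c)},
\]
where, by the previous step, $N(c)$ agrees with its value computed in $G(D_a)$, respectively $G(D_b)$; hence the right-hand side equals $f_{K_a}(t)+f_{K_b}(t)$. Since this expression makes no reference to the connection site, the identity holds for every connected sum. I expect the only genuine obstacle to be the bookkeeping in the middle step: making precise that the cut arc $a^{\ast}$ causes no real ambiguity (its two pieces carry the same assigned number, and together with the arcs touching the two splice points they are the only arcs of $G(D)$ not literally inherited from $G(D_a)$ or $G(D_b)$), and that inserting the contiguous block $S_b$ never disturbs the cyclic order of any endpoints of $D_a$-chords.
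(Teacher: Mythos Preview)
Your proof is correct and follows essentially the same route as the paper: work on Gauss diagrams, observe that after the splice the arc labels on the $D_a$-side (resp.\ $D_b$-side) shift by a single additive constant, so the chord invariants $N(c)$, the writhes, and the parities are all preserved, whence the polynomial splits as a sum. The paper's version is terser (it simply asserts, via a picture, that the arc labels on $D_a$ and $D_b$ increase by $j$ and $i$ respectively), whereas you spell out the parity check and the constancy of the $D_b$-contribution more carefully; your phrase ``first exhaust $S_a$'' is slightly imprecise---you traverse only part of $S_a$ before entering $S_b$---but your argument clearly uses the correct picture.
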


\begin{proof}
Choose two diagrams of $K_a$ and $K_b$, say $D_a$ and $D_b$. Assume the connecting is made at two arcs of $D_a$ and $D_b$, say $a_1$ and $a_2$ with $N(a_1)=i$ and $N(a_2)=j$. Then the Gauss diagram of $D_a\#D_b$ is described as below.
\begin{center}
\includegraphics{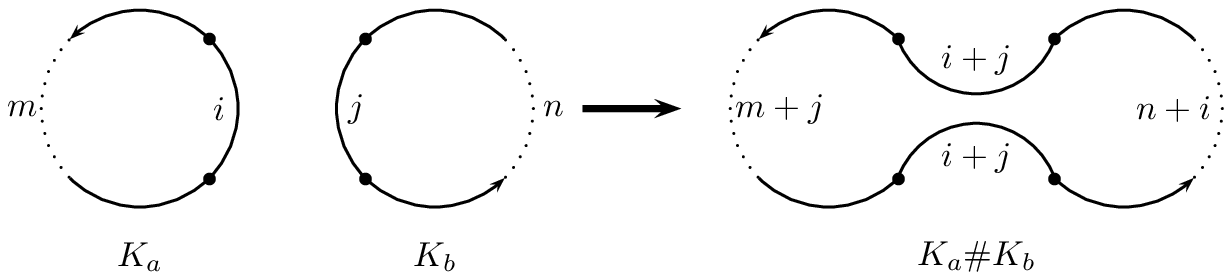} \centerline{\small Figure
15\quad}
\end{center}
Note that the assigned numbers of the arcs in $D_a$ and $D_b$ are increased by $j$ and $i$ respectively, hence the assigned number of each chord is preserved. It follows that $f_{K_a\#K_b}(t)=f_{K_a}(t)+f_{K_b}(t)$. The proof is finished.
\end{proof}

\section{The characterization of the odd writhe polynomial}
As an application of the properties given in Section 4, now let us consider the characterization problem of the odd writhe polynomial. We want to identify which kind of polynomial of $\mathbb{Z}[t, t^{-1}]$ can be realized as the odd writhe polynomial of a virtual knot.

First we recall that according to the definition of the odd writhe polynomial, we only account all the odd chords, hence the exponents in the polynomial are all even. It follows that the polynomial can be written as
\bc
$f=a_{2n}t^{2n}+a_{2n-2}t^{2n-2}+\cdots+a_{2n-2m}t^{2n-2m}$ $(m\geq0)$.
\ec
Second by Lemma 2.2, the odd writhe invariant is always even, since $J(K)=f_K(\pm1)$, we conclude that the coefficients of the odd writhe polynomial should satisfy
\bc
$\sum\limits_{i=2n-2m}^{2n}a_i=0$ $($mod 2$)$.
\ec

The following theorem tells us the two conditions above are not only necessary but also sufficient.
\begin{theorem}
A polynomial $f$ can be realized as the odd writhe polynomial of a virtual knot if and only if $f$ can be written as
\bc
$f=a_{2n}t^{2n}+a_{2n-2}t^{2n-2}+\cdots+a_{2n-2m}t^{2n-2m}$ $(m\geq0)$,
\ec
where the coefficients satisfy
\bc
$\sum\limits_{i=2n-2m}^{2n}a_i=0$ $($\emph{mod 2}$)$.
\ec
\end{theorem}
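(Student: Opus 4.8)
The two stated conditions are necessary for the reasons already recorded above: the exponents of $f_K$ are even because $N(c_i)$ is even for every odd chord, and $\sum_i a_i=f_K(\pm1)=J(K)$ is even by Lemma 2.2. So the whole task is the sufficiency, i.e.\ to realize every such $f$ as some $f_K$, and the plan is to reduce this to one explicit family of diagrams by exploiting the structural results of Section~4.

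First I would observe that the set $R\subseteq\mathbb{Z}[t,t^{-1}]$ of polynomials of the form $f_K$ is closed under three operations: addition, by Proposition~4.8 (pass to a connected sum); negation, since $-f_K=f_{\overline{K}^{*}}$ by Corollary~4.7; and the involution $\sigma\colon f(t)\mapsto t^{2}f(t^{-1})$, since $\sigma(f_K)=f_{K^{*}}$ by Proposition~4.4. As the trivial knot gives $0\in R$, the set $R$ is in fact a subgroup of $\mathbb{Z}[t,t^{-1}]$, and it is $\sigma$-invariant. Hence it suffices to exhibit, inside $R$, a set of generators for the group $H$ of all Laurent polynomials with even exponents and even coefficient sum. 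A short computation shows that $H$ is generated by the constant $2$ together with the polynomials $t^{2k}-1$ for $k\in\mathbb{Z}\setminus\{0\}$: any $f=\sum_j a_j t^{2j}\in H$ equals $\sum_{j\ne0}a_j(t^{2j}-1)+\tfrac12\bigl(\sum_j a_j\bigr)\cdot 2$. Moreover $\sigma(t^{2(m+1)}-1)=t^{-2m}-t^{2}$, so once $2$ and all $t^{2k}-1$ with $k\ge1$ lie in $R$, the identity $(t^{-2m}-t^{2})+(t^{2}-1)=t^{-2m}-1$ (using $t^{2}-1\in R$, the case $k=1$ below) puts the remaining generators $t^{2k}-1$, $k\le-1$, in $R$ as well. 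So everything comes down to realizing the constant $2$ and the family $\{t^{2k}-1:k\ge1\}$.

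The constant $2$ is immediate: by Proposition~4.4 the orientation reverse of the knot in Example~4.1 has odd writhe polynomial $t^{2}(t^{-2}-1)=1-t^{2}$, so the connected sum of the virtual trefoil (with $f=t^{2}+1$, Figure~9) and that knot realizes $(t^{2}+1)+(1-t^{2})=2$.

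The genuinely new input, and the step I expect to be the main obstacle, is the family: for each $k\ge1$ I must produce a virtual knot diagram whose odd writhe polynomial is exactly $t^{2k}-1$, the case $k=1$ being Example~4.1. The natural model is a Gauss diagram organised around one distinguished positive chord $c_0$ that is transversally crossed by $2k-1$ further chords, arranged so that the two arcs flanking $c_0$ carry labels differing by $2k$, i.e.\ $N(c_0)=2k$ (this is the configuration sketched in Figure~12); among the $2k-1$ crossing chords one is negative and placed so as to contribute the $-t^{0}$ term, while the remaining $2k-2$ are arranged to be even and therefore contribute nothing. I would draw this family explicitly and then verify, directly from the definitions of $N(a_i)$ and $N(c_i)$, that the arc labels are as claimed, that only $c_0$ and the designated negative chord are odd, and that their contributions are $+t^{2k}$ and $-t^{0}$. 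Granting this, $R$ contains $2$ and every $t^{2k}-1$ with $k\ge1$, hence $H\subseteq R$; together with the necessity $R\subseteq H$ this gives $R=H$, which is exactly the theorem. The only delicate point in the whole argument is the parity bookkeeping and the computation $N(c_0)=2k$ for the model family; everything else is the formalism already developed in Section~4.
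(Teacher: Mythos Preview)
Your argument is correct and is essentially a cleaner, group-theoretic repackaging of the paper's own proof. Both proofs rest on the same three structural facts---additivity under connected sum, the sign flip under $\overline{K}^{*}$, and the $\sigma$-symmetry coming from $K^{*}$---and both ultimately feed in an explicit one-parameter family of Gauss diagrams in which a single chord has $N=2k$. The paper proceeds more computationally: it builds knots $L_{2k}$ (one positive ``vertical'' chord crossed by $2k-1$ parallel positive ``horizontal'' chords, all odd) with $f_{L_{2k}}(t)=t^{2k}+2k-1$, forms the signed connected sum $\sum a_{2i}L_{2i}$ (using $L^{*}$ for negative exponents and $\overline{L}^{*}$ for negative coefficients), and then mops up the resulting error in the $t^{2}$- and $t^{0}$-coefficients with suitable multiples of the virtual trefoil $M$ ($f_M=t^{2}+1$) and the knot $N$ of Example~4.1 ($f_N=t^{2}-1$). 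Your version instead identifies generators $\{2\}\cup\{t^{2k}-1:k\ne0\}$ of $H$ up front and realizes those; this makes the bookkeeping more transparent and replaces the paper's explicit coefficient-matching at the end by a one-line group-theoretic remark.

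The one place where your outline is genuinely thinner than the paper's is the family you propose for $t^{2k}-1$. You want $2k-2$ of the $2k-1$ chords crossing $c_0$ to be \emph{even}; since each of them already crosses $c_0$, they must cross one another as well, and once they are no longer parallel the verification that $N(c_0)=2k$ and that the parities come out as claimed is a real computation, not something one reads off Figure~12 (where all $2k-1$ horizontals are parallel and hence odd). This can be carried out, but there is a shortcut that avoids it entirely and stays within your framework: the paper's all-parallel, all-positive $L_{2k}$ gives $t^{2k}+2k-1\in R$ immediately, and since you already have $2\in R$, your desired generator is $t^{2k}-1=f_{L_{2k}}-k\cdot 2\in R$. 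With that substitution your proof is complete and arguably tidier than the original. (Minor bookkeeping: connected-sum additivity is Proposition~4.9 and $f_K=-f_{\overline{K}^{*}}$ is Corollary~4.8; your citations are off by one.)
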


\begin{proof}
The necessary part has been mentioned, it suffices to prove the sufficient part. We want to construct a virtual knot $K$ such that
\bc
$f_K(t)=a_{2n}t^{2n}+a_{2n-2}t^{2n-2}+\cdots+a_{2n-2m}t^{2n-2m}$ $(m\geq0)$.
\ec

First of all, let us consider these virtual knots $L_{2k}$ $(k\geq1)$ as follows. Unlike the Gauss diagrams given in Figure 12, at present the $2k-1$ horizontal chords are parallel.
\begin{center}
\includegraphics{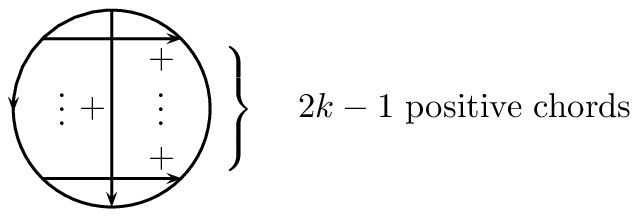} \centerline{\small Figure
16\quad}
\end{center}
Note that the Gauss diagram above contains $2k$ positive chords and all of them are odd chords. Direct calculation shows that $f_{L_{2k}}(t)=t^{2k}+2k-1$. Together with Proposition 4.4, Proposition 4.6 and Corollary 4.8, we have
\begin{align*}
f_{L_{2k}}(t)&=t^{2k}+2k-1, \\
f_{L^*_{2k}}(t)&=t^{-2k+2}+(2k-1)t^2, \\
f_{\overline{L_{2k}}}(t)&=-t^{-2k+2}-(2k-1)t^2, \\
f_{\overline{L_{2k}}^*}(t)&=-t^{2k}-2k+1. \\
\end{align*}
Consider the virtual knot $L=a_{2n}L_{2n}\#\cdots\#a_4L_4\#a_{-2}L^*_4\#\cdots\#a_{2n-2m}L^*_{2m-2n+2}$, if a coefficient before $L_i$ is negative, then we replace $L_i$ by $\overline{L_i}^*$. Hence the representation of $L$ makes sense. According to Proposition 4.9, we have
\bc
$f_L(t)=a_{2n}t^{2n}+\cdots+a_4t^4+a_{-2}t^{-2}+\cdots+a_{2n-2m}t^{2n-2m}+b_2t^2+b_0$,
\ec
where
\begin{align*}
&b_2=\pm3a_{-2}+\cdots+\pm(2m-2n+1)a_{2n-2m}, \\
&b_0=\pm3a_4+\cdots+\pm(2n-1)a_{2n}. \\
\end{align*}
Hence it suffices to construct a knot with odd writhe polynomial $(a_2-b_2)t^2+a_0-b_0$, note that $a_2-b_2+a_0-b_0=\sum\limits_{i=2n-2m}^{2n}a_i=0$ $($mod 2$)$.

Now let us consider the two virtual knots in Figure 9 and Figure 10, we use $M$ and $N$ to denote them here. It has been shown that
\bc
$f_M(t)=t^2+1$, $f_N(t)=t^2-1$.
\ec
As a result, we have
\bc
$f_{(\frac{a_2-b_2+a_0-b_0}{2})M\#(\frac{a_2-b_2-a_0+b_0}{2})N}(t)=(a_2-b_2)t^2+a_0-b_0.$
\ec
As before, if $a_2-b_2+a_0-b_0$ $(a_2-b_2-a_0+b_0)$ is negative, then we replace $M$ $(N)$ by $\overline{M}^*$ $(\overline{N}^*)$. Recall that $a_2-b_2+a_0-b_0$ and $a_2-b_2-a_0+b_0$ are both even, hence $(\frac{a_2-b_2+a_0-b_0}{2})M\#(\frac{a_2-b_2-a_0+b_0}{2})N$ makes sense, here the connected sum means to choose arbitrary connecting place.

In conclusion, the virtual knot $K=L\#(\frac{a_2-b_2+a_0-b_0}{2})M\#(\frac{a_2-b_2-a_0+b_0}{2})N$ satisfies our requirement. The proof is finished.
\end{proof}

Finally we want to spend a little time in discussing the case of classical knots. We have mentioned that the odd writhe polynomial is trivial on classical knots. It is natural to ask whether we can define a similar polynomial invariant for classical knots. For example, we wish it can tell the difference between a knot and its inverse. Given a knot diagram $K$, since there is no odd crossing point at present, we have to consider the polynomial
\bc
$f_K(t)=\sum\limits_{c_i\in C(K)}w(c_i)t^{N(c_i)},$
\ec
here $C(K)$ denotes the crossing points of a diagram $K$, and $N(c_i)$ is defined as before. However $f_K(t)$ is not an invariant. In fact it is easy to find that $f_K(t)$ is invariant under the second and the third Reidemeister move, but the first Reidemeister move can change the value of it. In order to define an invariant, we can consider the polynomial
\bc
$F_K(t)=f_K(t)-w(K)t,$
\ec
here $w(K)$ denotes the writhe of the diagram. It is easy to check that $F_K(t)$ defined above is invariant under the first Reidemeister move. Since the writhe is kept under the second and the third Reidemeister move, it follows that $F_K(t)$ is a polynomial invariant of classical knots. Unfortunately, with a little computation we find that $F_K(t)$ is a trivial invariant, i.e. $F_K(t)=0$ for all classical knots. In other words, $f_K(t)=w(K)t$ for all classical knot diagrams. In fact given a classical knot diagram, it is not difficult to observer that $N(c_i)\equiv1$ for each crossing point $c_i$, hence the result follows.

\end{document}